\let\c@author\relax
\newcommand{\Xset}{\mathcal{X}}
\DeclareMathOperator{\prox}{prox}
\definecolor{darkgreen}{rgb}{0.00, 0.50, 0.00}
\newtheorem{theo}{\bf Theorem}[section]
\newtheorem{coro}{\bf Corollary}[section]
\newtheorem{lem}[coro]{\bf Lemma} 
\newtheorem{defin}[coro]{\bf Definition}
\newtheorem{prop}[coro]{\bf Proposition}
\numberwithin{equation}{section}
\def\R{{\mathbb{R}}}
\def\N{{\mathbb{N}}}
\def\rd{{\mathbb{R}^{d}}}
\DeclareMathOperator{\dist}{dist}
\def\Rd{\mathbb{R}^d}
\def\ve{{\varepsilon}}
\def\avint{\,\ThisStyle{\ensurestackMath{%
  \stackinset{c}{.2\LMpt}{c}{.5\LMpt}{\SavedStyle-}{\SavedStyle\phantom{\int}}}%
  \setbox0=\hbox{$\SavedStyle\int\,$}\kern-\wd0}\int}
\newcommand{\normalcone}[1]{\operatorname{N}_{#1}}
\begin{document}

\begin{frontmatter}

\title{Convergence of projected stochastic approximation algorithm}

\author[1]{Michał Borowski}\ead{m.borowski@mimuw.edu.pl}
\author[1]{Błażej Miasojedow}\ead{}

\address[1]{Institute of Applied Mathematics and Mechanics,
University of Warsaw, ul. Banacha 2, 02-097 Warsaw, Poland
}

\fntext[myfootnote]{MSC2020: 62L20 (34D05, 46B50)}
\fntext[myfootnote]{B.M. is supported by NCN grant 2018/31/B/ST1/00253 }

\begin{abstract} 
  
We study the Robbins-Monro stochastic approximation algorithm with projections on a hyperrectangle and prove its convergence. This work fills a gap in the convergence proof of the classic book by Kushner and Yin. Using the ODE method, we show that the algorithm converges to stationary points of a related projected ODE. Our results provide a better theoretical foundation for stochastic optimization techniques, including stochastic gradient descent and its proximal version. These results extend the algorithm's applicability and relax some assumptions of previous research.

\end{abstract}
\begin{keyword} stochastic approximation, ODE method, Robbins--Monro, stochastic proximal gradient

\end{keyword}

\end{frontmatter}
\section{Introduction}\label{sec:intro}
Stochastic approximation (SA) algorithms are widely used for solving optimization problems and finding roots of functions when the data is noisy or incomplete. These iterative methods update estimates based on observed samples and are fundamental in fields such as machine learning and signal processing. A key challenge in classical SA algorithms is ensuring stability, i.e., that the iterates remain within a compact set. Without stability, the algorithm may diverge, making theoretical analysis difficult. A standard solution is to use a projected version, where the iterates are kept within a predefined set using projections. This paper focuses on proving the convergence of such projected SA algorithms while filling gaps in existing proofs.
 
 We focus on a variant of the Robbins-Monro algorithm that incorporates projections, which is used to approximate a root of a continuous function $h : \rd \to \rd$. The algorithm updates iteratively according to the rule
\begin{equation}\label{eq:def-SA}\tag{SA}
    x_{n+1} = \Pi_K\left(x_n + \gamma_n\Phi(x_n, \xi_n))\right)\,,
\end{equation}
where the sequence $(\gamma_n)_n$ of positive numbers denotes step sizes,  $K$ denotes a hyperrectangle, and $\Pi_K : \Rd \to K$ is the projection onto the set $K$. The term $\Phi(x_n, \xi_n)$ stands for some approximation of $h(x_n)$, possibly depending on some information $\xi_n$ obtained through the proceeding of the algorithm. 

We write $\Phi(x_n, \xi_n) = h(x_n) + e_n + r_n$, meaning that $(e_n + r_n)_n$ are $\rd$-valued random variables modelling the noise. We work under standard assumptions on step sizes and errors, that is
\begin{equation}\tag{Con-$\gamma$}\label{eq:con-gamma}
    \sum_{n=1}^{\infty} \gamma_n = \infty\,, \quad \lim_{n \to \infty} \gamma_n = 0\,, \quad 
\end{equation}
\begin{equation}\tag{Con-$e$-$r$}\label{eq:con-er}
    \lim_{n \to \infty} |r_n| = 0 \text{ almost surely} \quad \text{ and } \quad \sum_{n=1}^{\infty} \gamma_ne_n \text{ converges almost surely.}
\end{equation}
The first assumption of~\eqref{eq:con-gamma} keeps the procedure ongoing, while~\eqref{eq:con-er} ensures that the noise diminishes over time. Note that those assumptions do not refer to the nature of the noise, which can be both implicit and explicit. We also stress that the sequence $(e_n)_n$ does not have to be bounded.

Putting in~\eqref{eq:def-SA} function $h=-\nabla f$ for some convex and smooth function $f$, we get the minimization algorithm for function $f$, i.e., stochastic gradient descent, which is a foundational tool in stochastic optimization. Its applications range from machine learning~\cite{Bottou, Bottou2, Zinkevich}, including deep learning~\cite{Cao}, reinforcement learning~\cite{Li}, multi-agent systems~\cite{Bianchi}, signal processing~\cite{Borkar2, Pereyra}, models with incomplete data~\cite{Delyon, Fort-Moulines, Andrieu-Moulines, Blazej}, and more. See in particular applications of projected~\eqref{eq:def-SA}, for instance~\cite{Beck, Duchi, Gupta}. Even in the case of $f$ being non-smooth/non-convex, the method can be adapted, obtaining so-called proximal stochastic gradient descent, see~\cite{Majewski, Parikh, Bolte} and references therein. 

Despite widespread application, the theoretical properties of~\eqref{eq:def-SA}, particularly convergence, are not fully understood. The classical text by Kushner and Yin~\cite{Kushner} is often cited as providing a comprehensive analysis of these algorithms, see for instance~\cite{Bianchi, Larson}. However, a closer examination reveals gaps in the proof of convergence, i.e., proof of~\cite[Theorem 5.2.1]{Kushner}. To the author's best knowledge, the complete proof is missing in the literature.

The primary objective of this paper is to address this gap by providing rigorous proof of convergence for the projected Robbins-Monro algorithm under the assumption that the constraint set is a hyperrectangle. We stress that from the view of the applicability of the method, this assumption is not very restrictive, as the set $K$ is introduced to the technique mainly to guarantee stability (boundedness) of the iteration~\eqref{eq:def-SA}, meaning that the exact structure of $K$ is not crucial. In fact, hyperrectangles have an advantage over other sets and offer computational simplicity in projecting. Nonetheless, we stress that the case of the convergence of~\eqref{eq:def-SA} for an arbitrary convex and bounded set $K$, claimed in~\cite[Theorem 5.2.1]{Kushner}, remains unsolved.

Our analysis builds upon the ODE (Ordinary Differential Equation) method, which views the stochastic approximation as a discretization of a projected ODE, see Section~\ref{sec:meth} for more details. Following this approach, we establish the equicontinuity of the relevant sequences (Theorem~\ref{theo:main}) and demonstrate that the algorithm converges to the stationary points of the projected ODE, see Theorem~\ref{theo:final}.

Our contributions are twofold. First, we provide a detailed proof of convergence for the Robbins-Monro algorithm with projection onto a hyperrectangle, filling a critical gap in the existing literature. Second, we discuss the implications of our results to non-convex and non-smooth optimization, in particular, proximal stochastic gradient descent and stochastic subgradient algorithms. This includes relaxing some assumptions in prior work~\cite{Majewski}, thereby broadening the applicability of the result.

The remainder of the paper is organized as follows. Section~\ref{sec:meth} reviews the methodology of the ODE approach and identifies the specific gaps in the proof of~\cite[Theorem 5.2.1]{Kushner}. We fill this gap by proving Theorem~\ref{theo:main}, whose proof is the main point of Section~\ref{sec:res}. In section~\ref{sec:app}, we present the convergence result for~\eqref{eq:def-SA} in Theorem~\ref{theo:final} and discuss additional applications of Theorem~\ref{theo:main}, including the convergence of the stochastic proximal gradient descent algorithm, detailed in Theorem~\ref{thm:stoch-prox-grad-conv}.
\section{Preliminary concepts}\label{sec:meth}
In this section, we shall discuss the general methodology of analyzing the convergence of the~\eqref{eq:def-SA} algorithm under assumptions~\eqref{eq:con-gamma} and~\eqref{eq:con-er}, especially the approach described in~\cite[Theorem 5.2.1]{Kushner}. The analysis is based on the so-called ODE method, dating back to~\cite{Ljung, Kushner2, Borkar}. The main point of the method is to view~\eqref{eq:def-SA} as a discretization of a continuous process — projected ODE. As step sizes in~\eqref{eq:def-SA} tend to $0$, one expects that the iteration's limiting behavior matches the behavior of the ODE. In particular, it should be possible to identify this limiting behavior as a solution to the ODE. On the other hand, if all trajectories of the ODE converge to some point — stationary state — iteration~\eqref{eq:def-SA} should converge to this point.

Let us now express it more formally. We stress that the set $K$ shall always be a rectangle, i.e., $K = \prod_{i=1}^{d} [a_i, b_i]$, where $a_i < b_i$ for every $i$. At first, having the procedure~\eqref{eq:def-SA}, let us define the sequence of projections $(P_n)_n$ as
\begin{equation*}
    P_n = x_n + \gamma_n(e_n + r_n + h(x_n)) - \Pi_K\left(x_n + \gamma_n(e_n + r_n + h(x_n))\right)\,.
\end{equation*}
Then, we may write
\begin{equation*}
    x_{n+1} = x_n + \gamma_n(e_n + r_n + h(x_n)) - P_n\,,
\end{equation*}
and moreover
\begin{equation}\label{eq:projtheo2}
    x_{n+1} = x_1 + \sum_{k=1}^{n} \left(\gamma_k(e_k + r_k + h(x_k)) - P_k\right)\,.
\end{equation}
Let us define the sequence $(t_n)_{n \in \N}$ as $t_n \coloneqq \sum_{k=1}^{n} \gamma_k$, and the sequences $(X_n)_n$ and $(Z_n)_n$ as
\begin{align}
    X_1(t) &= x_k\, \quad \text{ for } t \in [t_{k-1}, t_k) \quad \text{ and } \quad X_n(t) = X_1(t + t_{n-1})\, \label{eq:def-f} \\
    Z_1(t) &= \sum_{s=1}^{k} P_s \quad \text{ for } t \in [t_{k-1}, t_k) \quad \text{ and } \quad Z_n(t) = Z_1(t + t_{n-1})\,. \label{eq:def-Z}
\end{align}
According to~\eqref{eq:def-f}, $X_1$ is defined as a piecewise constant for each $k \in N$ obtaining value $x_k$ on the interval of length $\gamma_k$. On the other hand, functions $(X_n)_n$ are successive translations of the function $X_1$, approximating its limiting behavior. Let us remark that although we defined $(X_n)_n$, $(Z_n)_n$ as piecewise constant, one can also define them as piecewise linear and, therefore, continuous without essentially affecting the whole approach.

Following the ODE method, we want to obtain from the sequences $(X_n)_n$ and $(Z_n)_n$ the solution of the following projected ODE
\begin{equation}\label{eq:ode}
    \dot x = h(x) - z\,, \quad z \in N_K(x)\,.
\end{equation}
Here $N_K(a)$, for $a \in \Rd$, denotes (outer) normal cone to the set $K$ in point $a$, i.e.,
\begin{equation}\label{eq:defnorm}
    N_K(a) \coloneqq \{b \in \Rd \quad | \quad \forall {c \in K} : \langle b, a - c \rangle \geq 0\}\,.
\end{equation}
We say that pair $(X, z)$, $X, z : 
\R \to \Rd$ is a solution to~\eqref{eq:ode} if $X$ is differentiable, $z$ is measurable, $X'(t) = h(X(t)) - z(t)$ for a.e. $t \in \R$, and $z(t) \in N_K(X(t))$ for a.e. $t \in \R$. Let us stress that one could also define projected ODE as $\dot x = \Pi_{T_K(x)}(h(x))$. Here $T_K(a)$, for $a \in \Rd$, denotes the tangent cone to the set $K$ in point $a$, i.e.,
\begin{equation}\label{eq:deftan}
    T_K(a) \coloneqq \{b \in \Rd \quad | \quad \forall {c \in N_K(a)} : \langle b, c \rangle \leq 0\}\,.
\end{equation}
To get a solution of~\eqref{eq:ode} from sequences $(X_n)_n$ and $(Z_n)_n$, one can use a generalization of Arzel\'a-Ascoli theorem, using the following definition of equicontinuity. 
\begin{defin}[Equicontinuity in extended sense]
    The sequence $(f_n)_n$, $f_n : [0, \infty) \to \Rd$ is equicontinuous in extended sense if for every $\ve > 0$ and $T > 0$, there exists $\delta > 0$ such that
    \begin{equation*}
        \limsup_{n \to \infty} |f_n(t) - f_n(s)| < \ve\,,\quad \text{whenever }\quad |t - s| < \delta \text{ and } s,t \leq T\,.
    \end{equation*}
\end{defin}
Arzel\`a--Ascoli--type theorem using the definition above reads as follows.
\begin{theo}[Corollary IV.8 in \cite{Dunford}]\label{theo:arz}
    Let $(f_n)_n$, $f_n : [0, \infty) \to \Rd$ be a sequence of functions equicontinuous in an extended sense, such that the sequence $(f_n(0))_n$ is uniformly bounded. Then there exists a subsequence $(f_{n_k})_k$ and continuous function $f : [0, \infty) \to \Rd$ such that
    \begin{equation*}
        f_{n_k} \xrightarrow{k \to \infty} f \text{ uniformly on the interval $[0, T]$, for every $T\,.$}
    \end{equation*}
\end{theo}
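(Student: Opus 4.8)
The plan is to adapt the classical proof of the Arzel\`a--Ascoli theorem to the extended-equicontinuity setting, replacing ordinary oscillation bounds by their $\limsup$ counterparts and extracting a subsequence by diagonalization. The first step is to upgrade the hypothesis that $(f_n(0))_n$ is bounded to pointwise boundedness of $(f_n(t))_n$ for every fixed $t$. To this end, fix $T$ and $\ve$, take the corresponding $\delta$, and choose a finite grid $0 = s_0 < s_1 < \dots < s_m = T$ with $|s_{j+1} - s_j| < \delta$. Telescoping together with $\limsup_n(a_n + b_n) \le \limsup_n a_n + \limsup_n b_n$ yields $\limsup_n |f_n(s_j) - f_n(0)| \le j\ve$, and one further application of extended equicontinuity bounds $\limsup_n |f_n(t)|$ for each $t \in [0,T]$. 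In particular $(f_n(t))_n$ is bounded for every fixed $t$.

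Next I would carry out the diagonal extraction. Let $D \subset [0,\infty)$ be countable and dense. Since $(f_n(q))_n$ is bounded for each $q \in D$, a standard diagonal argument produces a subsequence $(f_{n_k})_k$ for which $f_{n_k}(q)$ converges for every $q \in D$; call the limit $f(q)$. Extended equicontinuity survives along subsequences, as passing to a subsequence can only decrease the $\limsup$, so for $s, t \in D$ with $|s - t| < \delta$ we obtain $|f(t) - f(s)| = \lim_k |f_{n_k}(t) - f_{n_k}(s)| \le \limsup_n |f_n(t) - f_n(s)| < \ve$. Hence $f$ is uniformly continuous on $D \cap [0,T]$ for every $T$ and extends uniquely to a continuous function $f : [0,\infty) \to \Rd$.

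Finally I would promote pointwise convergence on $D$ to uniform convergence on each $[0,T]$. Fix $\ve, T$, take the corresponding $\delta$, and pick grid points $s_0, \dots, s_m \in D$ so that every $t \in [0,T]$ lies within $\delta$ of some $s_j$. For such $t$ and $s_j$, split
\[ |f_{n_k}(t) - f(t)| \le |f_{n_k}(t) - f_{n_k}(s_j)| + |f_{n_k}(s_j) - f(s_j)| + |f(s_j) - f(t)|. \]
The middle term tends to $0$ uniformly over the finitely many $s_j$, and the last term is below $\ve$ by the modulus of continuity of $f$ derived above. The first term is where the genuine difficulty lies: extended equicontinuity controls $\limsup_k |f_{n_k}(t) - f_{n_k}(s_j)|$ for each fixed pair, whereas uniform convergence demands control of the oscillation uniformly over the continuum of $t \in [0,T]$.

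I expect this commutation of $\sup_t$ with $\limsup_k$ to be the main obstacle, and overcoming it is precisely what the \emph{extended sense} of equicontinuity is designed for: one must argue that the tail oscillation of $f_{n_k}$ over $\delta$-close points is, in the $\limsup$ as $k \to \infty$, uniformly below $\ve$, after which letting $k \to \infty$ and then $\ve \to 0$ forces $\sup_{t \in [0,T]} |f_{n_k}(t) - f(t)| \to 0$. A concluding diagonalization over $T = 1, 2, \dots$ then delivers a single subsequence converging uniformly on every compact interval, as claimed.
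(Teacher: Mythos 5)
You have identified the crux but not crossed it: your final step concedes that extended equicontinuity, as the paper states it, only controls $\limsup_k |f_{n_k}(t) - f_{n_k}(s_j)|$ for each \emph{fixed} pair, while uniform convergence needs this control uniformly over the continuum of $t \in [0,T]$; you then assert that ``one must argue'' the commutation of $\sup_t$ with $\limsup_k$ without actually arguing it. This is a genuine gap, and it cannot be repaired under the definition as literally written in the paper, because in that pointwise-pair reading the theorem is false. Consider the moving, shrinking bumps
\begin{equation*}
  f_n(t) = \max\left(0,\; 1 - n\left|t - 1 - \tfrac{1}{\sqrt{n}}\right|\right), \qquad t \ge 0\,.
\end{equation*}
For every fixed $t$ one has $f_n(t) = 0$ for all large $n$ (the bump's support has width $2/n$, while its center lies at distance about $1/\sqrt{n}$ from $1$ and is bounded away from every fixed $t \neq 1$), so $\limsup_n |f_n(t) - f_n(s)| = 0$ for every fixed pair $(s,t)$: the pointwise-pair condition holds with \emph{any} $\delta$, and $f_n(0) = 0$ is bounded. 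Yet $\sup_{t \in [0,3]} f_{n}(t) = 1$ for every $n$, while the only candidate limit of any subsequence is $f \equiv 0$; hence no subsequence converges uniformly on $[0,3]$.

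The resolution is that in the cited source (Dunford--Schwartz, and likewise Kushner--Yin) equicontinuity in the extended sense places the supremum \emph{inside} the limit superior: for every $\ve, T$ there is $\delta$ with
\begin{equation*}
  \limsup_{n \to \infty} \; \sup_{s,t \le T,\ |t-s| < \delta} |f_n(t) - f_n(s)| \le \ve\,,
\end{equation*}
a condition your bump-free steps never exploit but which the counterexample above violates (the inner supremum equals $1$ for every $n$). With that hypothesis your proof closes immediately: steps 1 and 2 go through verbatim, and in step 3 the troublesome first term satisfies $\limsup_k \sup_{t \in [0,T]} |f_{n_k}(t) - f_{n_k}(s_{j(t)})| \le \ve$ directly by hypothesis, so no commutation of $\sup$ and $\limsup$ is needed; the concluding diagonalization over $T = 1, 2, \dots$ is fine. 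Note also that the paper itself gives no proof of this statement --- it imports it from Dunford--Schwartz --- so the standard to meet here is a self-contained argument, which yours is not yet: as written it is incomplete exactly at the step where the strength of the definition matters.
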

The aim is therefore to prove that sequences $(X_n)_n$ and $(Z_n)_n$ satisfy assumptions of Theorem~\ref{theo:arz}, and then prove that if any converging subsequence converges to some $(X, Z)$, then $(X, \frac{d}{dt}Z)$ is a solution of~\eqref{eq:ode}. The gap in the proof of~\cite[Theorem 5.2.1]{Kushner} appears in the moment of applying Theorem~\ref{theo:arz}, as the equicontinuity of sequences $(X_n)_n$ and $(Z_n)_n$ is not sufficiently discussed in there. The author's investigation of the rest of the proof revealed no significant gap.

The proof that sequences $(X_n)_n$ and $(Z_n)_n$ are equicontinuous is the subject of Theorem~\ref{theo:main}. For the sake of completeness, we provide Proposition~\ref{prop:satode}, which allows identifying the limits of subsequences of $((X_n, Z_n))$ with solutions to~\eqref{eq:ode}. Upon necessary stability conditions imposed on~\eqref{eq:ode}, we can then obtain convergence of $(x_n)_n$ to the stationary points of the ODE. See Theorem~\ref{theo:final} for the precise formulation. 
\section{Results}\label{sec:res}
Our main result, formulated in Theorem~\ref{theo:main}, concerns equicontinuity of the sequences $(X_n)_n$ and $(Z_n)_n$ defined in~\eqref{eq:def-f}. As stated in Section~\ref{sec:meth}, this kind of result is an important step in examining the convergence of~\eqref{eq:def-SA}. In particular, it is exactly a missing step in the proof of convergence in the book of Kushner and Yin~\cite[Theorem V.2.1]{Kushner}. The theorem reads as follows.

%
\begin{theo}\label{theo:main}
    Let the sequences $(\gamma_n)_n$, $(e_n)_n$ and $(r_n)_n$ satisfy~\eqref{eq:con-gamma} and~\eqref{eq:con-er}.
    Let $K \subseteq \Rd$ is a hyperrectangle, $d \in \N$, $h : K \to \Rd$ be bounded, $x_0 \in \Rd$ be arbitrary.
    Let the sequence $(x_n)_n$ be defined by~\eqref{eq:def-SA}, sequence $(X_n)_n$ by~\eqref{eq:def-f}, and sequence $(Z_n)_n$ as in~\eqref{eq:def-Z}. Then, with probability $1$, sequences $(X_n)_n$ and $(Z_n)_n$ are equicontinuous in the extended sense. Moreover, if a subsequence $(Z_{n_k})_k$ converges to a function $Z$, in the sense of Theorem~\ref{theo:arz}, then $Z$ is Lipschitz.
\end{theo}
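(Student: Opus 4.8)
The plan is to exploit that $K=\prod_{i=1}^{d}[a_i,b_i]$ is a hyperrectangle, so that $\Pi_K$ acts coordinatewise as the clipping map $y_i\mapsto\min(\max(y_i,a_i),b_i)$, and to reduce the whole argument to a one–dimensional statement per coordinate (this is precisely where rectangularity is indispensable, and where the general convex case breaks). I would first record the telescoping increment over a time window. Working on the probability–one event where the hypotheses \eqref{eq:con-er} hold, fix $\ve,T>0$ and $s<t\le T$ with $t-s<\delta$. For each $n$ there are indices $q=q(n)\le p=p(n)$ with $t+t_{n-1}\in[t_{p-1},t_p)$ and $s+t_{n-1}\in[t_{q-1},t_q)$, so that $X_n(t)-X_n(s)=x_p-x_q$ while $Z_n(t)-Z_n(s)$ equals the cumulative projection $\sum_k P_k$ over that window, up to the two boundary terms $P_q,P_p$. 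Writing $w_k=\gamma_k(h(x_k)+e_k+r_k)$ one has coordinatewise $x_{k+1}-x_k=w_k-P_k$, hence $x_p-x_q=\sum_{k=q}^{p-1}w_k-\sum_{k=q}^{p-1}P_k$. Since $\sum_k\gamma_k=\infty$ forces $t_{n-1}\to\infty$ and thus $q,p\to\infty$, while $\gamma_n\to0$ gives $t_{p-1}-t_{q-1}\to t-s$, the window length converges to $t-s$ as $n\to\infty$.

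The heart of the matter is bounding the cumulative projection $\sum_{k=q}^{p-1}P_k$ in each coordinate $i$. The crucial difficulty is that $(e_n)_n$ is unbounded, so $\sum_k\gamma_k|e_{k,i}|$ need not converge and the projections cannot be summed in absolute value. Instead I would prove a discrete two–sided Skorokhod (reflection) estimate: for the clipped recursion $u_{k+1}=\min(\max(u_k+w_{k,i},a_i),b_i)$ on $[a_i,b_i]$, the signed cumulative reflection over the window is controlled by the oscillation of the cumulative input alone,
\[
\Bigl|\sum_{k=q}^{p-1}(P_k)_i\Bigr|\le \operatorname*{osc}_{q\le j\le p} S_j:=\max_{q\le j\le p} S_j-\min_{q\le j\le p} S_j,\qquad S_j:=\sum_{k=q}^{j-1}w_{k,i}.
\]
The proof of this inequality is the key step and the main obstacle. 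One tracks $\rho_j:=u_j-u_q-S_j=\sum_{k=q}^{j-1}(u_{k+1}-u_k-w_{k,i})=-\sum_{k=q}^{j-1}(P_k)_i$, and observes that $\rho$ decreases only at steps where $u$ reaches the upper face (where it is reset to $(b_i-u_q)-S_{j+1}$) and increases only at the lower face (where it is reset to $(a_i-u_q)-S_{j+1}$). Looking at the last such binding step before $p$ then confines $\rho_p$ to the band $[-\max_j S_j,\,-\min_j S_j]$, which is exactly the claimed oscillation bound. This is the analogue of the step left unaddressed in \cite{Kushner}.

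It remains to show the oscillation of $S$ is small. Decomposing $w_{k,i}$ into the bounded drift $\gamma_k h(x_k)_i$, the vanishing part $\gamma_k r_{k,i}$, and the summable part $\gamma_k e_{k,i}$, and using $|h|\le M$ on $K$, the Cauchy property of the convergent series $\sum_k\gamma_k e_k$, and $|r_k|\to0$, I would bound $\operatorname{osc}S\le(M+\sup_{k\ge q}|r_k|)(t_{p-1}-t_{q-1})+\eta_q$, where $\eta_q$ is the tail oscillation of $\sum_k\gamma_k e_{k,i}$. Taking $\limsup_n$ makes the $r$– and $e$–contributions vanish almost surely and the window length tend to $t-s$, so $\limsup_n|\sum_k(P_k)_i|\le M(t-s)$ in each coordinate; the boundary terms $P_q,P_p$ are negligible since $\gamma_k e_k\to0$ (terms of a convergent series) together with $\gamma_k,|r_k|\to0$ force $|P_k|\le|w_k|\to0$. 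Summing over coordinates gives $\limsup_n|Z_n(t)-Z_n(s)|\le\sqrt{d}\,M(t-s)$, and $\limsup_n|X_n(t)-X_n(s)|\le 2\sqrt{d}\,M(t-s)$ via $x_p-x_q=\sum_k w_k-\sum_k P_k$; choosing $\delta$ accordingly yields equicontinuity in the extended sense of both sequences. Finally, if $Z_{n_k}\to Z$ in the sense of Theorem~\ref{theo:arz}, passing to the limit along the subsequence in $|Z_{n_k}(t)-Z_{n_k}(s)|$ gives $|Z(t)-Z(s)|\le\sqrt{d}\,M|t-s|$, so $Z$ is Lipschitz. The rest is routine bookkeeping with \eqref{eq:con-gamma} and \eqref{eq:con-er}.
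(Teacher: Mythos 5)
Your proposal is correct, and its key mechanism is genuinely different from the paper's. The paper also works coordinate by coordinate (this is where both arguments use that $K$ is a hyperrectangle), but its central observation is a ``same-face'' lemma: two consecutive non-trivial projections on coordinate $l$ occurring within a sufficiently short time window must project onto the same face of $K$, since otherwise the accumulated drift and noise would have to cross the whole gap $b^l-a^l$, which is impossible once $\sum_k\gamma_k$ over the window is small. The projection sum then telescopes, via $P_{n_{j+1}}^l = x_{n_j}^l - x_{n_{j+1}}^l + \sum_{k=n_j}^{n_{j+1}-1}\gamma_k(e_k^l+r_k^l+h^l(x_k))$, into a single projection term plus drift/noise sums, and equicontinuity and the Lipschitz property of limits are obtained by two separate (similar) computations. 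Your replacement is a discrete two-sided Skorokhod/reflection estimate, $\bigl|\sum_{k=q}^{p-1}(P_k)_i\bigr|\le\operatorname{osc}_{q\le j\le p}S_j$, which is a purely deterministic statement about the clipped recursion and, crucially, holds on windows of \emph{arbitrary} length; this uniformity yields equicontinuity and the Lipschitz continuity of limit functions in one stroke, with the cleaner constant $M\sqrt d$ (the paper's constant $(H+R)d$ retains the noise bound $R$), and the lemma is of independent interest. Both proofs treat the drift and noise identically (boundedness of $h$, Cauchy tails of $\sum_k\gamma_k e_k$, vanishing $r_k$, and $|P_k|\le|w_k|\to0$ for the boundary terms), so the comparison really is between the same-face telescoping and the reflection bound.

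One caveat on your key lemma: it is true (I verified it), but the justification ``looking at the last binding step before $p$'' is too terse for one of the two inequalities. At an upper-face reset one gets $\rho_{j+1}=(b_i-u_q)-S_{j+1}\ge-\max_{k}S_k$ immediately, but the bound $\rho_{j+1}\le-\min_k S_k$ is not determined by the last binding step alone; it requires tracing back through earlier bindings. The clean fix is a one-step induction with the invariant $\rho_j\in[-\max_{k\le j}S_k,\,-\min_{k\le j}S_k]$: at an upper binding, the overshoot condition $u_j+w_{j,i}>b_i$ reads $S_{j+1}>(b_i-u_q)-\rho_j\ge(b_i-u_q)+\min_{k\le j}S_k$, which restores the invariant, and the lower face is symmetric (the base case uses $S_q=0$, so the band always contains $0$). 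With that induction supplied, your argument is complete.
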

The proof of Theorem~\ref{theo:main} is relatively elementary and relies on key insights regarding projections onto a hyperrectangle. Essentially, proving equicontinuity involves managing the differences between the elements of the sequence $(x_n)_n$, which can be decomposed into two parts: the contribution from the projections and the contribution from the summation of the terms $\gamma_n(h(x_n) + e_n + r_n)$. Controlling the latter is not difficult, as it primarily depends on the assumptions~\eqref{eq:con-gamma} and~\eqref{eq:con-er} and the boundedness of the function $h$ on set $K$.

However, estimating the impact of the projections is more complicated. The fact that $K$ is a hyperrectangle plays a crucial role in this part of the proof. The structure of the hyperrectangle allows us to treat projections on each coordinate independently. A critical observation is that if two consecutive non-trivial projections are close to each other, they must project to the same point. Intuitively, this implies that if a sequence of non-trivial projections occurs within a short time frame, the projections — and consequently the iterates — cannot differ significantly.

Conversely, if projections are infrequent, the proof simplifies, as equicontinuity is more straightforward to establish without projections. The Lipschitz continuity of the limit of the sequence $(Z_{n_k})$ follows through a similar reasoning. We emphasize that the assumption that $K$ is a hyperrectangle cannot be skipped easily in our approach.

Before showing the actual proof of Theorem~\ref{theo:main}, let us prove the lemma concerning the equicontinuity of functions defined similarly as in~\eqref{eq:def-f} and~\eqref{eq:def-Z}.
\begin{lem}\label{lem:equic}
    Let the sequence $(\gamma_n)_n$ be a sequence of positive numbers satisfying~\eqref{eq:con-gamma}. For every $n$, define $t_n \coloneqq \sum_{k=1}^{n} \gamma_k$. Let $(y_n)_n$ be any sequence in $\Rd$ and define the sequence of functions $(f_n)_n$, $f_n : [0, \infty) \to \Rd$, as
    \begin{equation*}
        f_1(t) = \sum_{i=1}^{k} y_i\,, \quad \text{if } t \in [t_{k-1}, t_k), \quad f_n(t) = f_1(t + t_{n-1})\,.
    \end{equation*}
    Then, the following two assertions are equivalent.
    \begin{itemize}
        \item[(i)] Sequence $(f_n)_n$ is equicontinuous in the extended sense;
        
        \item[(ii)] For any $\ve > 0$, there exist $\delta > 0$ and $N \in \N$ such that
        \begin{equation*}
            \left| \sum_{k = n}^{m} y_k \right| < \ve\,, \text{ for every $n, m$, $N \leq n \leq m$, such that } \sum_{k=n}^{m} \gamma_k < \delta\,. 
        \end{equation*}
    \end{itemize}
\end{lem}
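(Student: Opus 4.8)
The plan is to prove both implications, after first rewriting the increment $f_n(t)-f_n(s)$ as a block sum of the $y_i$'s and recording how the index window depends on $n,s,t$. For $u\ge 0$ let $p_n(u)$ be the unique index $k$ with $u+t_{n-1}\in[t_{k-1},t_k)$, so that $f_n(u)=\sum_{i=1}^{p_n(u)}y_i$ and, for $s\le t$, $f_n(t)-f_n(s)=\sum_{i=p_n(s)+1}^{p_n(t)}y_i$. Two elementary facts drive everything. First, since $u+t_{n-1}\ge t_{n-1}$ one has $p_n(u)\ge n$, so both endpoints of the window go to $\infty$ with $n$, uniformly for $u$ in a bounded set, and $\gamma_{p_n(t)}\le\sup_{j\ge n}\gamma_j\to 0$ uniformly in $t$. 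Second, from $t_{p_n(t)-1}\le t+t_{n-1}$ and $s+t_{n-1}<t_{p_n(s)}$ one gets $\sum_{i=p_n(s)+1}^{p_n(t)-1}\gamma_i<t-s$, so the window $[p_n(s)+1,p_n(t)]$ has $\gamma$-mass at most $(t-s)+\gamma_{p_n(t)}$, a correction that is uniformly small for large $n$.

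For (ii)$\Rightarrow$(i) I would fix $\ve>0$ and $T>0$, apply (ii) with $\ve/2$ to obtain $\delta_0,N_0$, and set $\delta=\delta_0/2$. If $|t-s|<\delta$ and $s,t\le T$, then for all $n$ large enough — uniformly in such $s,t$, by the facts above — the window starts at $p_n(s)+1\ge n\ge N_0$ and has mass below $\delta+\gamma_{p_n(t)}<\delta_0$, so (ii) gives $|f_n(t)-f_n(s)|<\ve/2$. Taking $\limsup_n$ yields $\limsup_n|f_n(t)-f_n(s)|\le\ve/2<\ve$, which is exactly equicontinuity in the extended sense. This is the direction actually used afterwards to deduce equicontinuity of $(X_n),(Z_n)$ from a block estimate on the $y_i$'s.

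For (i)$\Rightarrow$(ii) I would argue by contraposition. Writing $S_k=\sum_{i=1}^k y_i$, the failure of (ii) produces $\ve_0>0$ such that for every $\delta$ and every $N$ there is a block $[n,m]$ with $n\ge N$, $\sum_{i=n}^m\gamma_i<\delta$ and $|S_m-S_{n-1}|\ge\ve_0$. The point is that every such block is realized exactly as an increment: taking the shift $\nu=n-1$ and $s=0$ we have $p_{n-1}(0)=n-1$, hence $f_{n-1}(0)=S_{n-1}$, while for $t\in[t_{m-1}-t_{n-2},\,t_m-t_{n-2})$ we have $p_{n-1}(t)=m$, hence $f_{n-1}(t)=S_m$ and $f_{n-1}(t)-f_{n-1}(0)=S_m-S_{n-1}$. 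Here $0<t<\delta+\gamma_{n-1}$, so the pair $(0,t)$ is admissible for the equicontinuity condition once $\delta$ is chosen slightly below the target. Letting $N\to\infty$ yields witnesses $(\nu_k=n_k-1;\,s=0,\,t_k)$ with $\nu_k\to\infty$, $t_k\in(0,\delta')$ and $|f_{\nu_k}(t_k)-f_{\nu_k}(0)|\ge\ve_0$.

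The main obstacle is precisely here: equicontinuity in the extended sense controls a $\limsup$ over the shift index at a \emph{fixed} pair $(s,t)$, whereas each bad block is witnessed at a single shift $\nu_k$, so one does not immediately contradict (i). The crux is to make the bad behaviour recur at one common window. I would exploit the normalisation $s=0$ (which fixes the left endpoint of the window at $\nu_k+1$ exactly, removing one source of drift), observe that the capture scales $t_k$ lie in the bounded interval $(0,\delta')$, and pass by Bolzano--Weierstrass to a subsequence with $t_k\to t^{*}$; the goal is then $\limsup_k|f_{\nu_k}(t^{*})-f_{\nu_k}(0)|\ge\ve_0$. The genuinely delicate step is to control the boundary block between scale $t_k$ and scale $t^{*}$ — a window of mass $|t^{*}-t_k|\to0$ sitting at the large index $m_k$ — which cannot simply be discarded, since the negation of (ii) precisely allows small-mass blocks to carry large sums. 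I expect this estimate, together with a careful choice of the subsequence so that the masses converge quickly relative to the local step sizes $\gamma_{m_k}$ (making the boundary block negligible or empty), to be the hard part of the whole lemma; no monotonicity is available, so it must be done by direct estimation.
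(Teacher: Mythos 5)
Your direction (ii)$\Rightarrow$(i) is correct and is essentially the paper's own argument: you bound the $\gamma$-mass of the index window by $(t-s)+\gamma_{p_n(t)}$, absorb the correction term $\gamma_{p_n(t)}$ for large $n$ using $\gamma_n\to 0$, and invoke (ii). This is also the only direction that is actually used later (to deduce equicontinuity of $(X_n)_n$ and $(Z_n)_n$ from block estimates), so the operative half of the lemma is in order.

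The direction (i)$\Rightarrow$(ii), however, is not proven in your proposal: you set up a contrapositive, normalize $s=0$, extract $t_k\to t^{*}$ by Bolzano--Weierstrass, and then explicitly leave open the estimate for the boundary block between $t_k$ and $t^{*}$, conceding it may be ``the hard part of the whole lemma.'' As written, this is a genuine gap, and within your reading of the definition it is not clear it can be closed: the negation of (ii) indeed permits windows of vanishing mass to carry sums of size $\ve_0$, which is exactly what sits in your boundary block. The obstacle is an artifact of interpreting equicontinuity in the extended sense \emph{pointwise} in the pair $(s,t)$ (for each fixed pair, $\limsup_n|f_n(t)-f_n(s)|<\ve$). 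The paper's proof instead uses the condition in its uniform form, which is how Kushner and Yin formulate it (the supremum over admissible pairs sits \emph{inside} the $\limsup$): there exist $\delta\in(0,1)$ and $N$ such that $|f_n(t)-f_n(s)|<\ve$ for \emph{all} $n\ge N$ and \emph{all} $s,t\le 1$ with $|t-s|<\delta$. Under that reading, (i)$\Rightarrow$(ii) is a two-line argument with no compactness and no subsequences: for $N\le n\le m$ with $\sum_{k=n}^{m}\gamma_k<\delta$ one has $t_{m-1}-t_{n-1}<\delta$, hence
\begin{equation*}
\left|\sum_{k}y_k\right|=\left|f_n(t_{m-1}-t_{n-1})-f_n(0)\right|<\ve\,,
\end{equation*}
and the fact that the pair $(0,\,t_{m-1}-t_{n-1})$ varies with $n$ is harmless precisely because the bound is uniform over pairs. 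So the fix is not a finer subsequence extraction but a change of the quantifier structure: either adopt the uniform formulation of extended equicontinuity (which your own (ii)$\Rightarrow$(i) argument in fact delivers, since your bound there is uniform over $s,t\le T$), or accept that under the strictly pointwise reading your contrapositive strategy stalls exactly where you said it does.
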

\begin{proof}
Let us start with proving the implication $(i) \Rightarrow (ii)$. Let us take any $\ve > 0$. By equicontinuity of $(f_n)_n$, there exist $\delta \in (0, 1)$ and $N \in \N$ such that for any $n \geq N$ and $t, s \leq 1$ such that $|t - s| < \delta$, it holds that
\begin{equation*}
    |f_n(t) - f_n(s)| < \ve\,.
\end{equation*}
Let us take any $n \geq N$ and $m \geq n$ such that $\sum_{k=n}^{m} \gamma_k < \delta$. We then have $t_{m-1} - t_{n-1} < \delta$, and therefore
\begin{equation*}
    \left| \sum_{k=n}^{m} y_k \right| = |f_n(t_{m-1} - t_{n-1}) - f_n(0)| < \ve\,. 
\end{equation*}
Thus, the implication $(i) \Rightarrow (ii)$ is proven. To prove the reverse implication, let us fix any $\ve > 0$ and $T > 0$. There exist $\delta > 0$ and $N \in \N$ such that for every $n \geq N$, it holds that
\begin{equation}\label{eq:lip30ass}
    \left| \sum_{k=n}^{m} y_k \right| < \ve\,, \text{ whenever } \quad \sum_{k=n}^{m} \gamma_k < \delta\,.
\end{equation}
Let us take any $n \geq N$ and $s, t \geq 0$ such that $s \leq t$ and $t - s \leq \tfrac{\delta}{2}$. Note that by the definition of $f_n$, there exist numbers $i, j \in \N$ such that
\begin{equation}\label{eq:lip30}
    f_n(s) = \sum_{k=1}^{i-1} y_k\,, \quad f_n(t) = \sum_{k=1}^{j} y_k\,.
\end{equation}
Indeed, we can take $i$ to be the lowest number such that $s < t_{i-1}$ and $j$ to be the greatest number such that $t_{j-1} \leq t$. Then, we get that $[t_{i-1}, t_{j-1}] \subseteq [s, t]$, and therefore
\begin{equation*}
    \sum_{k=i}^{j-1} \gamma_k \leq t - s \leq \frac{\delta}{2}\,.  
\end{equation*}
As the sequence $(\gamma_n)_n$ converges to $0$, for sufficiently large $n$ we have
\begin{equation*}
    \sum_{k=i}^{j} \gamma_k \leq \frac{\delta}{2} + \gamma_j < \delta\,.  
\end{equation*}
This in conjunction with~\eqref{eq:lip30ass} and~\eqref{eq:lip30} implies that
\begin{equation*}
    |f_n(t) - f_n(s)| = \left| \sum_{k=i}^{j} y_k \right| < \ve\,.
\end{equation*}
Thus, the implication $(ii) \Rightarrow (i)$ is proven.
\end{proof}
%

We are now in a position to prove Theorem~\ref{theo:main}.
\begin{proof}[Proof of Theorem~\ref{theo:main}]
    Let us denote $K = \Pi_{i=1}^{d} [a^i, b^i]$, where $a^1, \dots, a^d, b^1, \dots, b^d \in \R$, $a^i < b^i$ for every $i$. By formula~\eqref{eq:projtheo2} and Lemma~\ref{lem:equic}, to prove equicontinuity of the sequence $(X_n)_n$, it suffices to prove that almost surely, for every $\ve > 0$ there exists a $\delta > 0$ such that for sufficiently large $n, m$ it holds that
    \begin{equation}\label{eq:goal}
        \left| \sum_{k=n}^{m} \left(\gamma_ke_k + \gamma_kr_k + \gamma_kh(x_k) - P_k\right) \right| < \ve\,, \text{ whenever } \quad \sum_{k=n}^{m} \gamma_k < \delta\,.
    \end{equation}
    Note that as $h$ is bounded, there exists a constant $H > 0$ such that $|h(x)| \leq H$ for any $x \in K$, and therefore $|h(x_k)| \leq H$ for any $k$. Note also that almost surely the sequence $(|r_n|)$ is bounded by some constant $R > 0$. Fix any $\ve > 0$ and put $\delta_1 \coloneqq \frac{\ve}{2(H+R)}$. Whenever $\sum_{k=n}^{m} \gamma_k < \delta_1$, we have
    \begin{equation}\label{eq:lip1}
        \left|\sum_{k=n}^{m} \gamma_kh(x_k) + \gamma_kr_k \right| \leq (H+R)\sum_{k=n}^{m} \gamma_k \leq \frac{\ve}{2}\,. 
    \end{equation}
    On the other hand, by~\eqref{eq:con-er}, almost surely we have that for sufficiently large $n$ it holds that
    \begin{equation}\label{eq:lip2}
        \left|\sum_{k=n}^{m} \gamma_ke_k\right| < \frac{\ve}{2}\,.
    \end{equation}
    By~\eqref{eq:lip1} and~\eqref{eq:lip2}, we have that almost surely for any $\ve > 0$ there exist a $\delta_{\ve} > 0$ and $N_{\ve} \in \N$ such that we have
    \begin{equation}\label{eq:lip3}
        \left|\sum_{k=n}^{m} \gamma_k(e_k + r_k + h(x_k))\right| < \ve\,, \text{ whenever $N_{\ve} \leq n \leq m$ and } \sum_{k=n}^{m} \gamma_k < \delta_\ve \,.
    \end{equation}
    We shall now focus on estimating the sums $\sum_{k=n}^{m} P_k$. Let us fix a coordinate $l \in \{1, \dots, d\}$. We take the subsequence $(P_{n_j}^l)_j$ of all elements of the sequence $(P_{n}^l)_n$ that are non-zero, i.e., projection on $l$-th coordinate is not trivial. Note that for any $j$ we have
    \begin{equation}\label{eq:lip5}
        P_{n_{j+1}}^l = x_{n_j}^l - x_{n_{j+1}}^l + \sum_{k=n_j}^{n_{j+1} - 1} \gamma_k(e_k^l + r_k^l + h^l(x_k))\,, 
    \end{equation}
    as all the projections on $l$-th coordinate are zero between $n_j$ and $n_{j+1}$. Moreover, as in each point $n_j$, the projection is non-zero, we have that $x_{n_j}^l$ is equal to $a^l$ or $b^l$. Let us take $\ve = \frac{b^l - a^l}{2}$, and recall $\delta_{\ve}$, $N_{\ve}$ from~\eqref{eq:lip3}. Observe that for any $j$ such that $n_j \geq N_\ve$, it holds that
    \begin{equation}\label{eq:lip4}
        \text{if } \sum_{k=n_j}^{n_{j+1} - 1} \gamma_k < \delta_\ve\,,\quad \text{ then }\quad x_{n_j}^l = x_{n_{j+1}}^l\,.
    \end{equation}
    Indeed, reasoning by contradiction, suppose without loss of generality that $x_{n_j}^l = a^l$ and $x_{n_{j+1}}^l = b^l$. As at the point $n_{j+1}$ we made a non-zero projection on $l$-th coordinate, the point before projection has to be larger than $b^l$. This, together with the definition of $\delta_\ve$ gives us that
    \begin{equation*}
        b^l \leq x_{n_j}^l + \sum_{k=n_j}^{n_{j+1} - 1} \gamma_k(e_k^l + r_k^l + h^l(x_k)) \leq a^l + \frac{b^l - a^l}{2} < b^l\,,
    \end{equation*}
    which is a contradiction. Therefore,~\eqref{eq:lip4} is proven. Let us now fix any $\ve > 0$ and recall $\delta_{\ve / 2}$ in~\eqref{eq:lip3}. Using identity~\eqref{eq:lip5} and implication~\eqref{eq:lip4}, we obtain that for sufficiently large $n, m$ such that $\sum_{k=n}^{m} \gamma_k < \delta_{\ve/2}$, we have
    \begin{equation}\label{eq:lip6}
        \sum_{k=n}^{m} P_k^l = \sum_{k=i}^{j} P_{n_k}^l = P_{n_j}^l + \sum_{k=i}^{j-1} \sum_{s=n_k}^{n_{k+1} - 1} \gamma_s(e_s^l + r_s^l + h^l(x_s)) = P_{n_j}^l + \sum_{k=n_i}^{n_j - 1} \gamma_k(e_k^l + r_k^l + h^l(x_k))\,,
    \end{equation}
    where $i$ is the smallest number such that $n_i \geq n$ and $P_{n_i}^l \neq 0$, and $j$ is the biggest number such that $n_j \leq m$ and $P_{n_j}^l \neq 0$. By~\eqref{eq:con-gamma} and~\eqref{eq:con-er}, almost surely for sufficiently large $k$ we have $\gamma_k|e_k| \leq \frac{\ve}{4}$. Moreover, as $\gamma_k \xrightarrow{k \to \infty} 0$, we have that $|\gamma_k| \leq \frac{\ve}{4(H+R)}$, for sufficiently large $k$, and therefore
    \begin{equation*}
        |\gamma_k(e_k + r_k + h(x_k))| \leq \frac{\ve}{4} + (H+R) \cdot \frac{\ve}{4(H+R)} = \frac{\ve}{2}\,.
    \end{equation*}
    Thus, by the definition of the projection, it holds that
    \begin{align}\label{eq:lip28-4}
        |P_k^l| \leq |P_k| &= \dist\left(x_k + \gamma_k(e_k + r_k + h(x_k)), K\right) \nonumber \\ &\leq \dist(x_k + \gamma_k(e_k + r_k + h(x_k)), x_k) = |\gamma_k(e_k + r_k + h(x_k))| \leq \frac{\ve}{2}\,.
    \end{align}
    This in conjunction with~\eqref{eq:lip3} and~\eqref{eq:lip6} means that
    \begin{equation*}
        \left| \sum_{k=n}^{m} P_k^l \right| \leq |P_{n_j}^l| + \left|  \sum_{k=n_i}^{n_j - 1} \gamma_k(e_k^l + r_k^l + h^l(x_k)) \right| \leq \ve\,.
    \end{equation*}
    We therefore proved that for any $\ve > 0$, there exists $\delta > 0$ such that for sufficiently large $n, m$ it holds that
    \begin{equation*}
        \left| \sum_{k=n}^m P_k^l \right| < \ve\,, \text{ whenever } \quad \sum_{k=n}^{m} \gamma_k < \delta\,.
    \end{equation*}
    Since it is proven for any $l$ separately, using the inequality
    \begin{equation}
        \left| \sum_{k=n}^m P_k \right| \leq \sum_{l=1}^{d} \left| \sum_{k=n}^m P_k^l \right|\,,
    \end{equation}
    we obtain that for any $\ve > 0$, there exists $\delta > 0$ such that for sufficiently large $n$ it holds that
    \begin{equation}\label{eq:lip28}
        \left| \sum_{k=n}^m P_k \right| < \ve\,, \text{ whenever } \quad \sum_{k=n}^{m} \gamma_k < \delta\,.
    \end{equation}
    This, in conjunction with~\eqref{eq:lip3} and triangle inequality, gives us~\eqref{eq:goal}. By Lemma~\ref{lem:equic}, this gives us equicontinuity of the sequence $(X_n)_n$, while equicontinuity of $(Z_n)_n$ follows directly from~\eqref{eq:lip28}. \newline

    Now, we shall focus on proving the second part of the theorem. Let us take any subsequence $(Z_{n_k})$ converging to $Z$ uniformly on every interval. Let us fix any $s, t \geq 0$ and then fix any $\ve > 0$. We shall at first show that for sufficiently large $N$, it holds that
    \begin{equation}\label{eq:lip28-2}
        |Z_N(t) - Z_N(s)| \leq Hd|t-s| + \ve\,.
    \end{equation}
    By the definition of the sequence $(Z_n)_n$, there exist $n$ and $m$ such that
    \begin{equation*}
        \sum_{k=n}^{m-1} \gamma_k \leq |t - s| \quad \text{and} \quad |Z_N(t) - Z_N(s)| = \left| \sum_{k=n}^{m} P_k \right|\,.
    \end{equation*}
    Indeed, we may take $n$ to be the smallest number such that $s < t_{n-1} - t_{N-1}$, and $m$ to be the largest number such that $t_{m-1} - t_{N-1} \leq t$. Note that then $n \geq N$. Let us fix any $l \in \{1, \dots, d\}$. By the last display and~\eqref{eq:lip6}, we have
    \begin{equation}\label{eq:sie1-2}
        |Z_N^l(t) - Z_N^l(s)| \leq |P_{n_j}^l| + \left| \sum_{k=n_i}^{n_j - 1} \gamma_ke_k^l  \right| + \left| \sum_{k=n_i}^{n_j - 1} \gamma_k(h^l(x_k) + r_k) \right| \leq |P_{n_j}^l| + \left| \sum_{k=n_i}^{n_j - 1} \gamma_ke_k^l  \right| + (H+R)|t-s|\,.
    \end{equation}
    By assumption~\eqref{eq:con-gamma} and~\eqref{eq:con-er}, for sufficiently large $N$ it holds that
    \begin{equation}\label{eq:sie1}
        \left| \sum_{k=n_i}^{n_j - 1} \gamma_ke_k^l  \right| < \frac{\ve}{2d}\,.
    \end{equation}
    Following the same inequalities as in~\eqref{eq:lip28-4}, we have that $|P_{n_j}^l| < \tfrac{\ve}{2d}$. This in conjunction with~\eqref{eq:sie1} and~\eqref{eq:sie1-2} gives us that
    \begin{equation*}
        |Z_N^l(t) - Z_N^l(s)| \leq \frac{\ve}{d} + (H+R)|t-s|\,.
    \end{equation*}
    As $l$ is arbitrary, we have that
    \begin{equation*}
        |Z_N(t) - Z_N(s)| \leq \sum_{l=1}^{d} |Z_N^l(t) - Z_N^l(s)| \leq \ve + (H+R)d|t-s|\,.
    \end{equation*}
    By passing to the limit with $N \to \infty$, we obtain $|Z(t) - Z(s)| \leq \ve + (H+R)d|t-s|$, and as $\ve$ is arbitrary, we have $|Z(t) - Z(s)| \leq (H+R)d|t-s|$. Therefore, function $Z$ is Lipschitz.
\end{proof}
We shall now present the next steps of the proof of~\cite[Theorem V.2.1]{Kushner}. Equicontinuity of the sequence $(X_n)$, proven in Theorem~\ref{theo:main}, allows us to use Arzel\`a-Ascoli theorem~\ref{theo:arz}. The idea is to prove that any limit function $X$ solves the projected ODE corresponding to~\eqref{eq:def-SA}. As functions $(X_n)_n$ are built from the sequence $(x_n)_n$, there is a relation between the behavior of $(x_{n})_n$ at infinity and the behavior of limiting functions $X$. We note that by Theorem~\ref{theo:main}, the limit $Z$ possesses a weak derivative. Let us, therefore, prove that $X$ is a solution to the projected ODE.
\begin{prop}\label{prop:satode}
Let assumptions of Theorem~\ref{theo:main} be satisfied and let $h$ be continuous. Let $f$ and $Z$ be limits of some subsequences $(X_{n_k})$ and $(Z_{n_k})$, and let $z$ be the weak derivative of $Z$. Then the pair $(X, z)$ satisfies the following projected ODE
\begin{equation}\label{eq:projodee}
    \dot x = h(x) - z\,, \quad z \in N_K(x)\,.
\end{equation}
\end{prop}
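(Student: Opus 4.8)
The plan is to split the proof into two independent parts: first establishing the differential relation $\dot X = h(X) - z$ almost everywhere, and then verifying the constraint $z(t) \in N_K(X(t))$ for almost every $t$. For the first part I would start from the recursion~\eqref{eq:projtheo2} written from index $n$ onward and re-express the shifted function $X_n$ as
\begin{equation*}
    X_n(t) = X_n(0) + \int_0^t h(X_n(s))\,ds - \bigl(Z_n(t) - Z_n(0)\bigr) + \rho_n(t)\,,
\end{equation*}
where $\rho_n(t)$ gathers the tail sum $\sum \gamma_k e_k$, the term $\sum \gamma_k r_k$, the Riemann-sum error in replacing $\sum_k \gamma_k h(x_k)$ by $\int_0^t h(X_n(s))\,ds$, and two boundary terms $P_k$ accounting for the index shift between the partial sum of $P_k$ and $Z_n(t) - Z_n(0)$. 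Each contribution to $\rho_n$ tends to $0$ uniformly on compact time intervals: the first by convergence of $\sum \gamma_k e_k$, the second since $|r_k| \to 0$ while $\sum_{k=n}^m \gamma_k \le t$, the third because $\gamma_k \to 0$ and $h$ is bounded, and the last because $|P_k| \le |\gamma_k(e_k + r_k + h(x_k))| \to 0$ as in~\eqref{eq:lip28-4}. Passing to the limit along $(n_k)$, using the uniform convergence $X_{n_k} \to X$, $Z_{n_k} \to Z$ and the uniform continuity of $h$ on the compact set $K$ to move the limit inside the integral, yields $X(t) - X(0) = \int_0^t h(X(s))\,ds - (Z(t) - Z(0))$. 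Since $Z$ is Lipschitz by Theorem~\ref{theo:main}, it is absolutely continuous with weak derivative $z$, and differentiating gives $\dot X(t) = h(X(t)) - z(t)$ for a.e. $t$.

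The second part exploits the coordinate-wise structure of the hyperrectangle $K = \prod_{l=1}^d [a^l, b^l]$. The key facts are that $P_k = y_k - \Pi_K(y_k) \in N_K(x_{k+1})$ by the variational characterization of the projection, which coordinate-wise means $P_k^l > 0 \Rightarrow x_{k+1}^l = b^l$ and $P_k^l < 0 \Rightarrow x_{k+1}^l = a^l$, together with $|x_{k+1} - x_k| \le 2|\gamma_k(e_k + r_k + h(x_k))| \to 0$, so that consecutive iterates become arbitrarily close. I would fix a coordinate $l$ and a differentiability point $t$ of $Z$, and argue by cases on $X^l(t)$, using that $X$ is continuous. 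If $X^l(t) \in (a^l, b^l)$, continuity gives a neighborhood on which $X^l$ stays bounded away from both $a^l$ and $b^l$; for large $k$ the iterates in the corresponding window of $Z_{n_k}^l$ are then also bounded away from the boundary, so a nonzero $P^l$ would force a boundary jump contradicting the closeness of consecutive iterates. Hence all relevant $P^l$ vanish, $Z^l$ is constant near $t$, and $z^l(t) = 0$. If $X^l(t) = b^l$, continuity gives a neighborhood on which $X^l > \tfrac{a^l+b^l}{2}$; the same closeness argument rules out $P^l < 0$ on the corresponding window, so the increments $P^l$ are nonnegative, $Z^l$ is nondecreasing near $t$, and $z^l(t) \ge 0$. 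The case $X^l(t) = a^l$ is symmetric and gives $z^l(t) \le 0$. These three alternatives are exactly the coordinate components of $N_K(X(t))$, so $z(t) \in N_K(X(t))$ for a.e. $t$, completing the proof.

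I expect the main obstacle to lie in the second part, namely in making the localization rigorous: one must match the block of indices $p$ over which $Z_{n_k}^l(s_2) - Z_{n_k}^l(s_1) = \sum_p P_p^l$ with the iterates $x_p^l$ controlled by the uniform convergence $X_{n_k}^l \to X^l$ on $[s_1, s_2]$, and in particular handle the last index of the block, whose successor $x_{p+1}^l$ is controlled only through the vanishing increment $|x_{p+1}-x_p| \to 0$. Combining this with the restriction to differentiability points of the Lipschitz function $Z$, and interchanging the limit in $k$ with the monotonicity or constancy of $Z^l$ on a fixed neighborhood, is where care is needed; the first part is, by contrast, a routine term-by-term error estimate once $\rho_n \to 0$ is organized.
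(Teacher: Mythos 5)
Your first half is essentially the paper's own argument: the same decomposition into a Riemann-sum term, the two noise terms, and the projection increments yields $X(t) = X(0) + \int_0^t h(X(s))\,ds - Z(t) + Z(0)$, and differentiating via the Lipschitz continuity of $Z$ (from Theorem~\ref{theo:main}) gives $\dot X = h(X) - z$ a.e. The genuine difference is in the normal-cone inclusion. The paper proves it abstractly: from the fact that each $P_i$ lies in a normal cone of an iterate it deduces $Z_{n_k}(t)-Z_{n_k}(s) \in \mathrm{co}\bigl[\bigcup_{\tau\in[s,t]} N_K(X_{n_k}(\tau))\bigr]$, and then invokes upper semi-continuity of the normal cone map (\cite[Proposition 6.5]{Rockafellar}) twice --- once to pass to the limit in $k$, once to shrink the interval $[t-\delta,t+\delta]$ to $\{t\}$ at differentiability points --- concluding $z(t)\in N_K(X(t))$. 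You argue coordinate-wise instead: a nonzero $P_p^l$ forces $x_{p+1}^l\in\{a^l,b^l\}$, and combining local uniform convergence of $X_{n_k}^l$ with the vanishing increment $|x_{p+1}-x_p|\to 0$ rules out nontrivial projections (interior case) or projections of the wrong sign (face case) on a fixed time window around $t$; local constancy, respectively monotonicity, of $Z^l$ then gives exactly the sign conditions that characterize $N_K(X(t))$ for a box. Both routes are sound. Yours is more elementary and self-contained --- no set-valued semicontinuity, no convex-hull limit argument --- and incidentally your inclusion $P_k\in N_K(x_{k+1})$ is the precise one (the paper writes $P_n\in N_K(x_n)$, an index slip); the price is that your argument is wedded to the hyperrectangle, whereas the paper's second half would apply verbatim to any closed convex $K$ once the equicontinuity/limit input (which is where the rectangle is genuinely needed) is granted. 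The delicate points you flag --- matching the index block of a $Z_{n_k}$-increment with the values of $X_{n_k}$ on the time window, and the uncontrolled successor at the block's last index --- are real but are disposed of exactly by the vanishing-increment bound you state, so they do not constitute gaps.
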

\begin{proof}
We shall start by proving the identity
\begin{equation}\label{ode:int}
    X(t) = X(0) + \int_{0}^{t} h(X(s))\,ds - Z(t) + Z(0) \quad \text{ for all $t \geq 0$.}
\end{equation}
Let us take any $t \geq 0$. For any $k \in \N$, let $m_k \in \N$ be the greatest number such that $t_{m_k-1} - t_{n_k-1} \leq t$. Then, we have
\begin{equation}\label{eq:lip31}
    X_{n_k}(t) = x_{m_k} = x_{n_k} + \sum_{i=n_k}^{m_k-1} (\gamma_ih(x_i) + \gamma_ie_i + \gamma_ir_i - P_i)\,.
\end{equation}
For every $i$ such that $n_k \leq i \leq m_k - 1$, function $(X_{n_k})$ is constantly equal to $x_i$ on an interval of length $\gamma_i$. Thus
\begin{equation}\label{eq:lip31-2}
    \sum_{i=n_k}^{m_k-1} \gamma_ih(x_i) = \int_{0}^{t_{m_k-1} - t_{n_k-1}} h(X_{n_k}(s))\,ds =  \int_{0}^{t} h(X_{n_k}(s))\,ds - \int_{t_{m_k-1} - t_{n_k-1}}^{t} h(X_{n_k}(s))\,ds\,.
\end{equation}
As $X$ is uniform limit of $(X_{n_k})$ on $[0, t]$, and $h$ is continuous, we have
\begin{equation}\label{eq:lip31-3}
    \int_{0}^{t} h(X_{n_k}(s))\,ds \xrightarrow{k \to \infty} \int_{0}^{t} h(X(s))\,ds\,.
\end{equation}
Note that by the definition of $m_k$ it holds that $t - (t_{m_k-1} - t_{n_k - 1}) \leq \gamma_{m_k}$. Moreover, the sequence $(h \circ X_n)_n$ is uniformly bounded, as $h$ is bounded. This means that
\begin{equation*}
    \int_{t_{m_k-1} - t_{n_k-1}}^{t} h(X_{n_k}(s))\,ds \xrightarrow{k \to \infty} 0\,,
\end{equation*}
which in conjunction with~\eqref{eq:lip31-2} and~\eqref{eq:lip31-3} means that
\begin{equation}\label{eq:lip31-4}
    \sum_{i=n_k}^{m_k-1} \gamma_ih(x_i) \xrightarrow{k \to \infty} \int_{0}^{t} h(f(s))\,ds\,.
\end{equation}
Analogously, we have
\begin{equation*}
    \sum_{i=n_k}^{m_k-1} P_i = Z_{n_k}(t_{m_k - 1} - t_{n_k - 1}) - Z_{n_k}(0) = Z_{n_k}(t_{m_k - 1} - t_{n_k - 1}) - Z_{n_k}(t) + Z_{n_k}(t) - Z_{n_k}(0)\,. 
\end{equation*}
As $(Z_{n_k})_k$ is equicontinuous and $t - (t_{m_k-1} - t_{n_k - 1}) \xrightarrow{k \to \infty} 0$, we have 
\begin{equation}\label{eq:lip31-5}
    \sum_{i=n_k}^{m_k-1} P_i  \xrightarrow{k \to \infty} Z(t) - Z(0)\,.
\end{equation}
We can also estimate
\begin{equation}\label{eq:help4-10}
    \left|\sum_{i=n_k}^{m_k - 1} \gamma_ir_i\right| \leq \left(\sup_{i \geq n_k} |r_i|\right) \sum_{i=n_k}^{m_k - 1} \gamma_i \leq \left(\sup_{i \geq n_k} |r_i|\right)t \xrightarrow{k \to \infty} 0\,.
\end{equation}
As $\sum_{n=1}^{\infty} \gamma_ne_n$ is convergent almost surely, we have that $\sum_{i=n_k}^{m_k-1} \gamma_ie_i \xrightarrow{k \to \infty} 0$. This in conjunction with~\eqref{eq:lip31-4},~\eqref{eq:lip31-5}, and~\eqref{eq:help4-10}, gives us that
\begin{equation*}
    \sum_{i=n_k}^{m_k-1} (\gamma_ih(x_i) + \gamma_ie_i + \gamma_ir_i - P_i) \xrightarrow{k \to \infty} \int_{0}^{t} h(X(s))\,ds - Z(t) + Z(0)\,.
\end{equation*}
Therefore, by passing to the limit in~\eqref{eq:lip31}, we get
\begin{equation*}
    X(t) = X(0) + \int_{0}^{t} h(X(s))\,ds - Z(t) + Z(0)\,,
\end{equation*}
which is~\eqref{ode:int}. Let us now focus on proving that $(X, Z - Z(0))$ satisfies other requirements to be a solution to the integral form of projected ODE. From Theorem~\ref{theo:main}, function $Z$ is Lipschitz continuous. Therefore, for any $s, t \in \R$, we have
\begin{equation}\label{eq:sie1-3}
    |X(t) - X(s)| = \left| \int_{s}^{t} h(X(\tau))\,d\tau + Z(s) - Z(t)  \right| \leq H|t-s| + L_Z|t - s|\,, 
\end{equation}
where $H > 0$ is a constant such that $|h| \leq H$, and $L_Z$ is a Lipschitz constant of $Z$. By~\eqref{eq:sie1-3}, function $X$ is Lipschitz continuous. We get that almost everywhere $X$ and $Z$ have derivatives $\dot x$ and $z$, respectively, that satisfy
\begin{equation}\label{eq:help7-10}
    \dot x(t) = h(\dot x(t)) - z(t)\,.
\end{equation}
Note that $P_n \in N_K(x_n)$ for every $n$. Indeed, by the definition of projection, we have that for every $z \in K$, it holds that $\langle P_n, x_n - z \rangle \geq 0\,$, which by the definition of normal cone~\eqref{eq:defnorm} means that $P_n \in N_K(x_n)$. Therefore, for any $n, m \in \N$, we have
\begin{equation}\label{eq:sie1-4}
    \sum_{i=n}^{m} P_i \in \text{co} \left[ \bigcup_{i=n}^{m} N_K(x_i) \right]\,.
\end{equation}
Let us fix $s, t \geq 0$, $t \geq s$. For any $k \in \N$, let us denote by $j_k$ the smallest number such that $s < t_{j_k-1} - t_{n_k - 1}$, and by $m_k$, the largest number such that $t_{m_k - 1} - t_{n_k - 1} \leq t$, so that
\begin{equation*}
    Z_{n_k}(t) - Z_{n_k}(s) = \sum_{i=j_k}^{m_k} P_i\,.
\end{equation*}
Note that $X_{n_k}([s, t]) = \{x_{j_k - 1}, x_{j_k}, \dots, x_{m_k}\}$, which in conjunction with~\eqref{eq:sie1-4} gives us that
\begin{equation}\label{eq:sie1-5}
    Z_{n_k}(t) - Z_{n_k}(s) \in \text{co} \left[ \bigcup_{i=j_k}^{m_k} N_K(x_i) \right] \subseteq \text{co} \left[ \bigcup_{\tau \in [s, t]} N_K(X_{n_k}(\tau)) \right]\,.
\end{equation}
Since $X$ is a uniform limit of $(X_{n_k})$, for any $\delta > 0$, for sufficiently large $k$ we have $\sup_{\tau \in [s, t]}|X_{n_k}(\tau) - X(\tau)| < \delta$, which means
\begin{equation}\label{eq:help31oct}
    \text{co} \left[ \bigcup_{\tau \in [s, t]} N_K(X_{n_k}(\tau)) \right] \subseteq \text{co} \left[ \bigcup_{\tau \in [s, t]} \bigcup_{y \in B_{\delta}(X(\tau))} N_K(y) \right]\,.
\end{equation}
We shall now use the upper semi-continuity property of the normal cone~\cite[Proposition 6.5]{Rockafellar}. This property in conjunction with continuity of $X$ and~\eqref{eq:help31oct}, allows us to pass to the limit in~\eqref{eq:sie1-5}, i.e., to obtain
\begin{equation}\label{eq:sie1-6}
    Z(t) - Z(s) \in \bigcap_{\delta > 0} \text{co} \left[ \bigcup_{\tau \in [s, t]} \bigcup_{y \in B_{\delta}(X(\tau))} N_K(y) \right] \subseteq \text{co} \left[ \bigcup_{\tau \in [s, t]} N_K(X(\tau)) \right]\,.
\end{equation}
Note that whenever $Z$ is differentiable in $t$, then by~\eqref{eq:sie1-6}, we have
\begin{equation}\label{eq:help31-2}
    z(t) \in \bigcap_{\delta > 0} \text{co} \left[ \bigcup_{\tau \in [t-\delta, t+\delta]} N_K(X(\tau)) \right] = N_K(X(t))\,,
\end{equation}
where the last equality is again a consequence of~\cite[Proposition 6.5]{Rockafellar}. Assertion~\eqref{eq:help31-2} together with~\eqref{eq:help7-10} implies the result.
%
%
\end{proof}
\section{Applications}\label{sec:app}
The most straightforward application of Theorem~\ref{theo:main} and Proposition~\ref{prop:satode} is that they together fill the gap in the proof of~\cite[Theorem V.2.1]{Kushner}. That is, one can establish the convergence of~\eqref{eq:def-SA} under conditions of Theorem~\ref{theo:main}, provided that related projected ODE~\ref{eq:ode} has fine stability properties. Let us provide the precise theorem.
\begin{theo}\label{theo:final}
    Let the assumptions of Theorem~\ref{theo:main} be satisfied and let function $h$ be continuous. Let the function $V : K \to [0, \infty)$ be $C^1$ and such that for every $x \in \Rd$ it holds that
    \begin{equation*}
        \langle \nabla V(x), \Pi_{T_K}(h(x)) \rangle \leq 0\,.
    \end{equation*}
    Assume further that the set $S \coloneqq \{x \in K : \langle \nabla V(x), \Pi_{T_K}(h(x)) \rangle = 0\}$ has empty interior. Then almost surely $\dist(x_n, S)\xrightarrow{n \to \infty} 0$.
\end{theo}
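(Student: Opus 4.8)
The plan is to feed the subsequential ODE limits supplied by Proposition~\ref{prop:satode} into a Lyapunov (LaSalle-type) argument, working throughout on the almost-sure event on which the conclusions of Theorem~\ref{theo:main} and Proposition~\ref{prop:satode} hold. Since every iterate $x_n$ (for $n \ge 1$) lies in the compact hyperrectangle $K$, the sequence $(x_n)$ is bounded, so its set of limit points $\mathcal{L} \subseteq K$ is nonempty and compact; moreover $|x_{n+1} - x_n| \to 0$ (because $\gamma_n \to 0$, $h$ and $(r_n)$ are bounded, $\gamma_n e_n \to 0$, and $|P_n| \le |\gamma_n(h(x_n) + e_n + r_n)| \to 0$ as in~\eqref{eq:lip28-4}), so $\mathcal{L}$ is also connected. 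It therefore suffices to prove $\mathcal{L} \subseteq S$, since then $\dist(x_n, S) \le \dist(x_n, \mathcal{L}) \to 0$.

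First I would show that $V$ is a strict Lyapunov function for the projected ODE: along any solution $(X, z)$ produced by Proposition~\ref{prop:satode}, the map $t \mapsto V(X(t))$ is non-increasing, and is strictly decreasing whenever $X(t) \notin S$. The point is the identity $\dot X(t) = \Pi_{T_K(X(t))}(h(X(t)))$ for a.e.\ $t$. Since $X$ is a uniform limit of $K$-valued functions we have $X(t) \in K$ for all $t$, and as $X$ is Lipschitz this forces $\dot X(t) \in T_K(X(t))$ a.e.; combined with $\dot X = h(X) - z$, $z \in N_K(X)$, and the coordinatewise product structure of $N_K$ and $T_K$ for a hyperrectangle, one obtains the complementarity $\langle \dot X(t), z(t) \rangle = 0$ and hence the displayed identity via the Moreau decomposition. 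Consequently $\tfrac{d}{dt} V(X(t)) = \langle \nabla V(X(t)), \Pi_{T_K(X(t))}(h(X(t))) \rangle \le 0$, with equality if and only if $X(t) \in S$.

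Next I would promote the subsequential convergence to invariance and recurrence of $\mathcal{L}$. Theorem~\ref{theo:main} together with Proposition~\ref{prop:satode} says, in effect, that the interpolated process $X_1$ is an asymptotic pseudotrajectory of the (set-valued) flow of the projected differential inclusion~\eqref{eq:ode}; by the standard theory of such pseudotrajectories, extended to the differential-inclusion setting, the limit set $\mathcal{L}$ is internally chain transitive, and in particular the whole forward orbit through any $x^* \in \mathcal{L}$ remains in $\mathcal{L}$. The LaSalle step then runs as usual: on an internally chain transitive set a strict Lyapunov function must be constant, the empty-interior hypothesis on $S$ being precisely what permits this conclusion, so $V \equiv \mathrm{const}$ on $\mathcal{L}$. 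If some $x^* \in \mathcal{L}$ lay outside $S$, then $\tfrac{d}{dt} V(X(t)) < 0$ near $t = 0$ along the orbit through $x^*$ (which stays in $\mathcal{L}$), contradicting the constancy of $V$ on $\mathcal{L}$. Hence $\mathcal{L} \subseteq S$, completing the proof.

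The main obstacle is exactly this promotion from the \emph{subsequential} ODE limits of Proposition~\ref{prop:satode} to the invariance and internal chain transitivity of the entire limit set $\mathcal{L}$ — the genuine content of the ODE method, and the place where recurrence of the discrete trajectory must be converted into recurrence of the limiting orbits and then combined correctly with the empty-interior condition in the LaSalle conclusion. A secondary but real technical difficulty is the boundary identification $\dot X = \Pi_{T_K(X)}(h(X))$, since the projected vector field is discontinuous on $\partial K$; here the hyperrectangle structure, which decouples the coordinates, is precisely what makes the required complementarity tractable and, more broadly, what makes the necessary strict-descent estimates on compacta disjoint from $S$ usable.
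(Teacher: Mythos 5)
Your proposal takes the same route as the paper: the paper's proof of Theorem~\ref{theo:final} consists precisely in feeding Theorem~\ref{theo:main} and Proposition~\ref{prop:satode} into the perturbed-solution machinery of \cite[Theorem 3.5]{Majewski}, which is exactly the Bena\"im--Hofbauer--Sorin argument you sketch (limit set $\mathcal{L}$ compact, connected, invariant, internally chain transitive; Lyapunov function; LaSalle). Your preliminary steps are sound: $|x_{n+1}-x_n|\to 0$, the reduction to $\mathcal{L}\subseteq S$, the complementarity identity $\langle \dot X(t), z(t)\rangle = 0$ and hence $\dot X=\Pi_{T_K(X)}(h(X))$ a.e.\ (this step, incidentally, needs no hyperrectangle structure: for any closed convex $K$ the cones $T_K(x)$ and $N_K(x)$ are mutually polar, and $\pm\dot X(t)\in T_K(X(t))$ a.e.\ because $X$ takes values in $K$), and the resulting monotonicity of $t\mapsto V(X(t))$ along solutions.

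The genuine gap is in the LaSalle step, at exactly the point you describe as ``precisely what permits this conclusion.'' The claim ``on an internally chain transitive set a strict Lyapunov function must be constant'' is \emph{not} true under the hypothesis that $S$ has empty interior in $\Rd$; the result underlying \cite[Theorem 3.5]{Majewski} requires that the \emph{image} $V(S)\subseteq\R$ have empty interior, and the two conditions are incomparable. Under emptiness of the interior of $S$ alone the implication fails: the standard counterexample is a planar flow whose equilibrium set is an arc $A$ (so $S=A$ has empty interior in the plane) along which $V$ is \emph{not} constant, with the endpoints of $A$ joined by a regular orbit along which $V$ strictly decreases; the union of $A$ and this orbit is compact, connected, invariant, and internally chain transitive (an $\ve$-chain can crawl along the arc of equilibria by arbitrarily small jumps and return through the connecting orbit), yet it is not contained in $S$ and $V$ is not constant on it. Nothing you have established rules out $\mathcal{L}$ being such a set --- chain transitivity, invariance, and monotonicity of $V$ along solutions all hold for it --- so the final two sentences of your third paragraph do not follow. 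The repair is to take the hypothesis in the form the paper itself uses in Theorem~\ref{thm:stoch-prox-grad-conv} (empty interior of the image, there $(f+g)(\mathcal{S})$) and which \cite[Theorem 3.5]{Majewski} actually assumes: if $V(S)$ has empty interior, then $V(\mathcal{L})$, an interval by connectedness and contained in $V(\mathcal{L}\cap S)$ by the chain-transitivity argument, collapses to a single point, after which invariance and strict decrease off $S$ force $\mathcal{L}\subseteq \overline{S}$, which suffices because $\dist(\cdot,S)=\dist(\cdot,\overline{S})$. As written, with empty interior required of $S$ itself rather than of $V(S)$, your argument cannot be completed.
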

The proof is technical and follows essentially the same lines as proof of~\cite[Theorem 3.5]{Majewski}. The natural choice of the function $V$ in Theorem~\ref{theo:final} is a Lyapunov function for the projected ODE~\eqref{eq:ode}. Then, the set $S$ is a set of stationary states of this ODE. The existence of such a function requires that all those stationary states are stable in the sense that all solutions converge to some of them at infinity. Then, Theorem~\ref{theo:final} allows us to conclude that the sequence $(x_n)_n$ converges to the set of stationary states. \newline

Let us now point out further consequences of our results. Following~\cite{Majewski}, our approach may also be used to analyze the convergence of the stochastic proximal gradient algorithm. The stochastic proximal gradient algorithm is an extension of the Proximal Gradient method, used when the function's gradient can't be computed precisely and is affected by some errors. Specifically, we want to minimize the composite function of the form
\[
P(x) = f(x) + g(x),
\]
where \( f \) is a continuously differentiable function defined on some open set \( \Xset \subseteq \mathbb{R}^d \), and \( g \) is a locally Lipschitz, bounded from below, and regular function in the sense of Clarke gradient (see \cite{Clarke}).

In many machine learning applications, \( f \) represents the empirical risk of the model, and \( g \) represents penalties that promote sparsity, like LASSO \cite{Tibshirani2011Regression}, MCP \cite{Zhang2010Nearly}, or SCAD \cite{fan2001Variable}. All penalties mentioned above are locally Lipschitz, bounded from below, and regular function in the sense of Clarke gradient. 

We look at situations where the gradient \( \nabla f(x_k) \) cannot be computed precisely, but noisy estimates \( H_k \) of \( \nabla f(x_k) \) are available. It is well known that extra conditions are needed, even without noise, to ensure that the iterates stay within a compact set and eventually reach stationary points. To solve this, we use a projected version of the algorithm.

Let \( \mathbb{I}_K \) denote the convex indicator function of the set \( K \), defined by
\[
\mathbb{I}(x) = \begin{cases}
0 & \text{if } x \in K, \\
\infty & \text{otherwise}.
\end{cases}
\]
Additionally, let \( \prox \) denote the proximal operator, which is defined as
\[
\prox_{\gamma g}(x) = \arg\min_{y \in \Xset} \left\{ g(y) + \frac{1}{2\gamma} \|x - y\|^2 \right\}.
\]
We consider two versions of the projected stochastic proximal gradient algorithm, given by the following updates:

\begin{align}\label{stoch1}
x_k &\in \prox_{\gamma_k (g + \mathbb{I}_K)} \left( x_{k-1} - \gamma_k H_k \right)\,, \\
x_k &\in \Pi_K \left( \prox_{\gamma_k g} \left( x_{k-1} - \gamma_k H_k \right) \right)\,.\label{stoch2}
\end{align}

These two projection approaches are not equivalent. Depending on the properties of \( g \) and \( K \), one may be easier to compute than the other.

By combining Theorem~\ref{theo:main} with \cite[Theorem 5.4]{Majewski}, we obtain the following convergence result:

\begin{theo}\label{thm:stoch-prox-grad-conv}Let $K \subseteq \Rd$ is a compact hyperrectangle. 
Assume that \( \Xset \subset \mathbb{R}^d \) is an open set, \( f: \Xset \to \mathbb{R} \) is a continuously differentiable function, and \( g: \Xset \to \mathbb{R} \) is a locally Lipschitz, bounded from below, regular function in the sense of the Clarke gradient. Denote by \( \delta_k \) the gradient perturbation:
\[
\delta_k = H_k - \nabla f(x_{k-1}) \,.
\]
Assume further that \( \delta_k \) can be decomposed as \( \delta_k = e_k + r_k \), where $(e_k),(r_k)$ satisfy \eqref{eq:con-er} and $(\gamma_k)$ satisfies \eqref{eq:con-gamma}. Let
\[
\mathcal{S} = \left\{ x \in K : 0 \in \nabla f(x) + \bar{\partial} g(x) - \normalcone{K}(x) \right\},
\]
where \( \bar{\partial} g \) is the Clarke gradient of \( g \) (see \cite{Clarke}), and \( \normalcone{K}(x) \) is the normal cone to the set \( K \) at \( x \in K \). Suppose that \( (f + g)(\mathcal{S}) \) has empty interior. Then:

\begin{enumerate}[(i)]
\item The sequence \( \{x_k\} \) generated by iterations \eqref{stoch1} converges to \( \mathcal{S} \).
\item If, in addition, \( g \) is Lipschitz, then the sequence \( \{x_k\} \) generated by iterations \eqref{stoch2} also converges to \( \mathcal{S} \).
\end{enumerate}
\end{theo}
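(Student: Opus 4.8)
The plan is to realise both schemes \eqref{stoch1} and \eqref{stoch2} as projected stochastic approximations of the differential inclusion associated with minimising $f+g$ over $K$, to supply the equicontinuity of the interpolated processes via Theorem~\ref{theo:main}, and then to invoke the stability and limit-identification machinery of \cite[Theorem 5.4]{Majewski} with the natural Lyapunov function $V=f+g$. The only genuinely new ingredient is the equicontinuity, which is exactly the step assumed (and previously unproven) in \cite{Majewski, Kushner}; once it is established, the convergence conclusion follows from \cite{Majewski}.

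First I would unfold \eqref{stoch1}. Writing the first-order optimality condition for the proximal minimisation and using the Clarke sum rule $\bar\partial(g+\mathbb{I}_K)=\bar\partial g + N_K$ — valid because $g$ is regular, $\mathbb{I}_K$ is convex, and $\mathrm{int}\,K\neq\emptyset$ — the iterate satisfies, for some selections $u_k\in\bar\partial g(x_k)$ and $p_k\in N_K(x_k)$,
\begin{equation*}
    x_k = x_{k-1} - \gamma_k\bigl(\nabla f(x_{k-1}) + u_k\bigr) - \gamma_k p_k - \gamma_k(e_k+r_k)\,,
\end{equation*}
after substituting $H_k=\nabla f(x_{k-1})+e_k+r_k$. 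This is precisely the structure of \eqref{eq:def-SA}: a bounded drift $-\nabla f-u_k$, a normal-cone residual $\gamma_k p_k\in N_K(x_k)$ in the role of $P_k$, and the noise split $e_k+r_k$ obeying \eqref{eq:con-er}.

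Next I would verify the boundedness needed to apply Theorem~\ref{theo:main}. Since $f\in C^1$ and $K$ is compact, $\nabla f$ is bounded on $K$; since $g$ is locally Lipschitz on the open set $\Xset\supseteq K$ and $K$ is compact, the Clarke subdifferential $\bar\partial g$ is uniformly bounded on $K$, so the selection $u_k$ is bounded. A key observation is that the proof of Theorem~\ref{theo:main} uses the drift only through the uniform bound $|h(x_k)|\le H$ along the iterates, never through $h$ being a fixed single-valued map; hence it applies verbatim with the bounded, possibly data-dependent, drift $-\nabla f(x_{k-1})-u_k$. Theorem~\ref{theo:main} then gives, with probability one, equicontinuity in the extended sense of $(X_n)$ and $(Z_n)$ and Lipschitz continuity of any limit of $(Z_{n_k})$. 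Arzel\`a--Ascoli (Theorem~\ref{theo:arz}), together with the set-valued analogue of the limit identification of Proposition~\ref{prop:satode} as carried out in \cite{Majewski}, shows that every subsequential limit solves the projected inclusion, and the LaSalle-type argument of \cite[Theorem 5.4]{Majewski} — $V=f+g$ is non-increasing along solutions, and $(f+g)(\mathcal{S})$ having empty interior prevents $V$ from remaining constant off $\mathcal{S}$ — yields $\dist(x_k,\mathcal{S})\to 0$ almost surely. This proves (i).

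For (ii) the operator $\Pi_K\circ\prox_{\gamma_k g}$ of \eqref{stoch2} is \emph{not} the combined prox $\prox_{\gamma_k(g+\mathbb{I}_K)}$, so an extra reduction is required, and this is where I expect the main obstacle to lie. Assuming $g$ globally Lipschitz, the inner prox gives $\prox_{\gamma_k g}(x_{k-1}-\gamma_k H_k)=x_{k-1}-\gamma_k H_k-\gamma_k v_k$ with $v_k\in\bar\partial g$ uniformly bounded, so the inner step is an $O(\gamma_k)$ perturbation and the outer projection contributes a residual $P_k\in N_K(x_k)$ exactly as in \eqref{eq:def-SA}. One must then show that this composition generates the same limiting inclusion as \eqref{stoch1}, up to interpolation terms that vanish. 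The Lipschitz hypothesis is essential precisely here: without a global bound on $\bar\partial g$ the inner prox step need not be $O(\gamma_k)$, and its residual can no longer be matched to a normal-cone term of the same inclusion, so the two schemes might approach different limit sets. Once this matching is in place, the same equicontinuity (Theorem~\ref{theo:main}) and convergence (\cite[Theorem 5.4]{Majewski}) conclusions apply, giving (ii).
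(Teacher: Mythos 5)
Your proposal takes essentially the same route as the paper: the paper's entire proof of this theorem is a two-line reduction to \cite[Theorem 5.4]{Majewski}, with the equicontinuity of the interpolated sequences and the Lipschitz continuity of the limit supplied by Theorem~\ref{theo:main}. Your elaboration — unfolding the prox optimality conditions, observing that the proof of Theorem~\ref{theo:main} uses the drift only through a uniform bound along the iterates (so it tolerates the iterate-dependent selection $u_k\in\bar\partial g(x_k)$ and a normal-cone residual in the role of $P_k$), and locating where the global Lipschitz hypothesis on $g$ enters in part (ii) — is consistent with, and more detailed than, what the paper itself records.
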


\begin{proof}
The proof follows the same arguments as in \cite{Majewski}, with the only difference being that the equicontinuity of the relevant sequences and the Lipschitz continuity of the limit are provided by Theorem~\ref{theo:main}.
\end{proof}
We note that our result does not require the boundedness of \( \delta_k \), which represents a significant improvement over \cite[Theorem 5.4]{Majewski}. We observe that Theorem~\ref{theo:final} and Theorem~\ref{thm:stoch-prox-grad-conv} are special cases of the general framework of differential inclusions considered in \cite{Majewski}. Our approach could also be easily adapted to the abovementioned framework.   In particular, \cite[Theorem 4.1 and 4.2]{Majewski} hold without the assumption of bounded noise. The only part of the argument in \cite{Majewski} where boundedness is required can be easily replaced by Theorem~\ref{theo:main}.

\printbibliography

\end{document}